\theoremstyle{plain} 
\newtheorem{theorem}{\indent\sc Theorem}[section]
\newtheorem{lemma}[theorem]{\indent\sc Lemma}
\newtheorem{corollary}[theorem]{\indent\sc Corollary}
\theoremstyle{definition} 
\newtheorem{remark}[theorem]{\indent\sc Remark}
\def\T{\texttt{T}}
\def\det{{\rm det}}
\def\address#1#2{\begingroup
\noindent\parbox[t]{7.8cm}{%
\small{\scshape\ignorespaces#1}\par\vskip1ex
\noindent\small{\itshape E-mail address}%
\/: #2\par\vskip4ex}\hfill%
\endgroup}%
\title{\uppercase{ON THE STRUCTURE OF LAPLACE CHARACTERISTIC POLYNOMIAL FOR CIRCULANT FOLIATION
}} 
\author{
%
\textsc{ Y.~S.~Kwon, A.~D.~Mednykh and I.~A.~Mednykh} 
}
\date{} 
\begin{document}

\maketitle

\footnote{
2010 \textit{Mathematics Subject Classification}.
Primary 05C30; Secondary 39A10.}
\footnote{
\textit{Key words and phrases}.
Spanning forest, rooted forest, circulant graph, $I$-graph, $Y$-graph, $H$-graph, Laplace matrix, Chebyshev polynomial.
}
\footnote{
The first and the third authors were supported by the Mathematical
Center in Akademgorodok, agreement no. 075-15-2019-1613 with the
Ministry of Science and Higher Education of the Russian Federation.
The second author was supported by the Basic Science Research
Program through the National Research Foundation of Korea (NRF)
funded by the Ministry of Education (2018R1D1A1B05048450).
}

\begin{abstract}
In this paper, we describe the structure of the Laplace characteristic polynomial $\chi_n(\lambda)$ for the infinite family of graphs $H_n=H_n(G_1,\,G_2,\ldots,G_m)$ obtained as a circulant foliation over a graph $H$ on $m$ vertices with fibers $G_1,\,G_2,\ldots,G_m.$ Each fiber $G_i=C_n(s_{i,1},\,s_{i,2},\ldots,s_{i,k_i})$ of this foliation is the circulant graph on $n$ vertices with jumps $s_{i,1},\,s_{i,2},\ldots,s_{i,k_i}.$ This family includes the family of generalized Petersen graphs, $I$-graphs, sandwiches of circulant graphs, discrete torus graphs and others. We show that the characteristic polynomial for such graphs can be decomposed into a finite product of algebraic functions evaluated at the roots of a linear combination of Chebyshev polynomials. Also, we prove that the characteristic polynomial can be represented in the form $\chi_n(\lambda)=p(\lambda)\,\chi_H(\lambda)a(n)^2,$ where $a(n)$ is a sequence of integer polynomials and $p(\lambda)$ is a prescribed integer polynomial. Moreover, we use the obtained results to produce analytic formulas for spectral graph invariants, such as the number of spanning trees and the number of spanning rooted forests.   
\end{abstract}

\section{Introduction} 

Let $G$ be a finite connected undirected graph. The complexity of the graph can be characterized through the number of edges or vertices, the number of spanning trees, rooted spanning forests or the Kirchhoff index. 


All the above mentioned characteristics can be expressed in terms of Laplacian eigenvalues of a graph or more precisely, through coefficients of its Laplace characteristic polynomials. The Laplace characteristic polynomial of a graph plays an important role in statistic physics, where the graphs with arbitrary large number of vertices are considered (\cite{Wu77}, \cite{JacSalSok}). In this case, the structure of the Laplace characteristic polynomial becomes quite complicated and the most interesting invariants are given by its asymptotic.

There are enormous number of papers devoted to counting of the number of spanning trees. 
See survey papers \cite{BoePro} and \cite{WeiFu}. On the other hand, little is known about analytic 
formulas for the number of spanning forests. One of the first results was obtained by O. Knill 
\cite{Knill} who proved that the number of rooted spanning forests in the complete graph $K_n$ 
on $n$ vertices is equal to $(n+1)^{n-1}.$ Explicit formulas for the number of rooted spanning 
forests in cycle, star, line graph and some other graphs were given in (\cite{Sz}, \cite{Knill}). 
The number of rooted forests in circulant graphs has been calculated in \cite{GrunMed}. 
Generalizations of these results for circulant foliation were done in \cite{GrunKwonMed}. The results 
of the mentioned papers can be easily obtained if we know the structure of the Laplace characteristic polynomial.

The aim of the present paper is to investigate the structure of the Laplace characteristic polynomial 
for circulant foliation over a given graph. The notion of circulant foliation was introduced in paper
\cite{KwonMedMed20}. We note that the family of such graphs is quite rich. It includes circulant 
graphs, generalized Petersen graphs, $I$-, $Y$-, $H$-graphs, discrete tori and others.

The structure of the paper is as follows. Some preliminary results and basic definitions are given in Section~\ref{basic}. In Section~\ref{circulant foliation} we define the notion of circulant foliation over a graph. 
In Section~\ref{counting}, we describe the structure of the Laplace characteristic polynomial $\chi_n(\lambda)$ 
of the graph $H_n=H_n(G_1,\,G_2,\ldots,G_m)$ obtained as a circulant foliation over a graph $H$ on $m$ vertices with fibers $G_1,\,G_2,\ldots,G_m.$ Each fiber $G_i=C_n(s_{i,1},\,s_{i,2},\ldots,s_{i,k_i})$ of this foliation is the circulant graph on $n$ vertices with jumps $s_{i,1},\,s_{i,2},\ldots,s_{i,k_i}.$ It will be shown that the characteristic polynomial for such graphs can be decomposed into a finite product of algebraic functions evaluated at the roots of a linear combination of Chebyshev polynomials. 
In Section~\ref{arithmetic}, we prove that the characteristic polynomial can be represented in the
form $\chi_n(\lambda)=p(\lambda)\,\chi_H(\lambda)a(n)^2,$ where $a(n)$ is a sequence of  integer polynomials and $p(\lambda)$ is a prescribed integer polynomial depending on the number of odd elements in the set of $s_{i,j}.$ 
In Section~\ref{application}, we applied the obtained results to find analytical formulas for the number of spanning  trees and the number of spanning forests in circulant foliation $H_n.$ The number of rooted $k$-colored spanning forests will also be obtained. 
In the last section, we illustrate the obtained results by a series of examples.

\section{Basic definitions and preliminary facts}\label{basic}

Consider a connected finite graph $G,$ allowed to have multiple
edges but without loops. We denote the vertex and edge set of $G$ by
$V(G)$ and $E(G),$ respectively. Given $u, v\in V(G),$ we set
$a_{uv}$ to be the number of edges between vertices $u$ and $v.$ The
matrix $A=A(G)=\{a_{uv}\}_{u, v\in V(G)}$ is called \textit{the
adjacency matrix} of the graph $G.$ The degree $d_{v}$ of a vertex
$v\in V(G)$ is defined by $d_v=\sum_{u\in V(G)}a_{uv}.$ Let $D=D(G)$
be the diagonal matrix indexed by the elements of $V(G)$ with
$d_{vv} = d_v.$ The matrix $L=L(G)=D(G)-A(G)$ is called \textit{the
Laplace matrix}, or simply \textit{Laplacian}, of the graph $G.$
\textit{The characteristic polynomial} of the Laplacian of $G$ is defined as the
polynomial $ \chi_G (\lambda) = \det(L(G) - \lambda I),$ where $I$ is the identity matrix of order equal to the number of vertices of $G.$

In this paper we prefer to deal with more general definition of
Laplacian. Let $X=\{x_v,\,v\in V(G)\}$ be the set of variables and
let $X(G)$ be the diagonal matrix indexed by the elements of $V(G)$
with diagonal elements $x_v.$ The \textit{generalized Laplace
matrix} of $G,$ denoted by $L(G,X),$ is given by $L(G,X)=X(G)-A(G).$
In the particular case $x_v=d_v,$ we have $L(G,X)=L(G).$


We call an $n\times n$ matrix \textit{circulant,} and denote it by $circ(a_0, a_1,\ldots,a_{n-1})$ if it is of the form
$$circ(a_0, a_1,\ldots, a_{n-1})=
\left(\begin{array}{ccccc}
a_0 & a_1 & a_2 & \ldots & a_{n-1} \\
a_{n-1} & a_0 & a_1 & \ldots & a_{n-2} \\
  & \vdots &   &  & \vdots \\
a_1 & a_2 & a_3 & \ldots & a_0\\
\end{array}\right).$$

Recall \cite{PJDav} that the eigenvalues of matrix $\texttt{C}=circ(a_0,a_1,\ldots,a_{n-1})$ 
are given by the following simple formulas $\lambda_j=p(\varepsilon^j_n),\,j=0,1,\ldots,n-1$
where $p(x)=a_0+a_1 x+\cdots+a_{n-1}x^{n-1}$ and $\varepsilon_n$ is an order $n$ 
primitive root of the unity. Moreover, the circulant matrix $\texttt{C}=p(\T_n ),$ where 
$\T_{n}=circ(0,1,0,\ldots,0)$ is the matrix representation of the shift operator
$\T_{n}:(x_0,x_1,\ldots,x_{n-2},x_{n-1})\rightarrow(x_1,x_2,\ldots, x_{n-1},x_0).$ 
We note that all $n\times n$ circulant matrices share the same set of eigenvectors. 
Hence, any set of $n\times n$ circulant matrices can be simultaneously diagonalizable.

Let $s_1,s_2,\ldots,s_k$ be integers such that $1\leq s_1<s_2<\cdots<s_k<\frac{n}{2}.$ 
The graph $C_n(s_1,s_2,\dots,s_k)$ with $n$ vertices $0,1,2,\dots,n-1$ is called 
\textit{circulant graph} if the vertex $i,0\leq i\leq n-1$ is adjacent to the vertices
$i\pm s_1, i\pm s_2,\dots,i\pm s_k$ (mod $n$). All vertices of the graph are of even 
degree $2k$. In this paper, we also allow the empty circulant graph $C_n(\emptyset)$ 
consisting of $n$ isolated vertices.

\section{Circulant foliation over a graph}\label{circulant foliation}

Let $H$ be a finite graph with vertices $v_1,v_2,\ldots,v_m,$
allowed to have multiple edges but without loops. Denote by
$a_{i\,j}$ the number of edges between vertices $v_i$ and $v_j.$
Since $H$ has no loops, we have $a_{i\,i}=0.$ To define the
circulant foliation $H_n=H_n(G_1,\,G_2,\ldots,G_m)$ we prescribe to
each vertex $v_i$ a circulant graph $G_i=C_n(s_{i,1},\,s_{i,2},\ldots,s_{i,k_i}).$ 
In the case $G_i=C_{n}(\emptyset)$ we set $k_{i}=0$. The
\textit{circulant foliation} $H_n=H_n(G_1,\,G_2,\ldots,G_m)$ over
$H$ with fibers $G_1,\,G_2,\ldots,G_m$ is a graph with the vertex
set $V(H_n)=\{(k,\,v_i)\ | \,k=1,2,\ldots n,\,i=1,2,\ldots,m\},$
where for a fixed $k$ the vertices $(k,\,v_i)$ and $(k,\,v_j)$ are
connected by $a_{i\,j}$ edges, while for a fixed $i,$ the vertices
$(k,\,v_i),\,k=1,2,\ldots n$ form a graph $C_n(s_{i,1},\,s_{i,2},\ldots,s_{i,k_i})$ 
in which the vertex $(k,\,v_i)$ is adjacent to the vertices 
$(k\pm s_{i,1},v_i),(k\pm s_{i,2},\,v_i),\ldots,(k\pm s_{i,k_i},\,v_i)\,(\textrm{mod}\ n).$
Along the text we will refer to $H$ as a \textit{base graph} for the circulant foliation $H_{n}.$

There is a projection $\varphi:H_n\to H$ sending the vertices
$(k,\,v_i),\,k=1,\ldots,n$ and edges between them to the vertex
$v_i$ and for given $k$ each edge between the vertices $(k,\,v_i)$
and $(k,\,v_j),\,i\neq j$ bijectively to an edge between $v_i$ and
$v_j.$ For each vertex $v_i$ of graph $H$ we have
$\varphi^{-1}(v_i)=G_i,\,i=1,2,\ldots,m.$

Consider an action of the cyclic group $\mathbb{Z}_n$ on the graph
$H_n$ defined by the action $(k,\,v_{i})\to(k+t  ,\,v_{i})$ for any $t \in \mathbb{Z}_n$ 
and $i=1, \ldots, m.$ Then the group $\mathbb{Z}_n$ acts freely on the set of vertices and
the set of edges and the factor graph $H_{n}/\mathbb{Z}_{n}$ is an \textit{equipped graph} 
$\widehat{H}$ obtained from the graph $H$ by attaching $k_i$ loops to each $i$-th vertex of $H.$

By making use of the voltage technique \cite{GrossTucker}, one can
construct the graph $H_n$ in the following way. We put an
orientation to all edges of $\widehat{H}$ including loops. Then we
prescribe the voltage $0$ to all edges of subgraph $H$ of
$\widehat{H}$ and the voltage $s_{i,j}\,\mod n$ to the $j$-th loop
attached to $i$-th vertex of $H.$ The respective voltage covering is
the graph $H_n.$

\bigskip
As the first example, we consider the \textit{sandwich graph}
$SW_n=H_n(G_1,\,G_2,\ldots,G_m).$ The graph $SW_n$ has the path
graph on $m$ vertices $v_1,v_2,\ldots,v_m$ as its base graph $H,$
with prescribed circulant graphs $G_i,\,i=1,2,\ldots,m.$ A very
particular case of this construction, known as $I$-graph $I(n,k,l),$
occurs by taking $m=2,\,G_1=C_n(k)$ and $G_2=C_n(l).$ Also, the
generalized Petersen graph \cite{SS09} arises as $GP(n,k)=I(n,k,1).$
The sandwich of two circulant graphs $H_n(G_1,G_2)$ was investigated
in \cite{AbrBaiGruMed}.

As the second example, we consider the \textit{generalized $Y$-graph} 
$Y_n=Y_n(G_1,G_2,G_3),$ where $G_1,G_2,G_3$ are given circulant 
graphs on $n$ vertices. To construct $Y_n,$ we consider a $Y$-shape 
graph $H$ consisting of four vertices $v_1,v_2,v_3,v_4$ and three edges 
$v_1v_4,v_2v_4,v_3v_4.$ Let $G_4=C_n(\emptyset)$ be the empty graph 
on $n$ vertices. Then, by definition, we put $Y_n=H_n(G_1,G_2,G_3,G_4).$ 
In a particular case, if $G_1=C_n(k),G_2=C_n(l),$ and $G_3=C_n(m),$ 
then the graph $Y_n$ coincides with the $Y$-graph  $Y(n;k,l,m)$ defined 
earlier in \cite{BigIYH, HorBou}.

The third example is the \textit{generalized $H$-graph}
$H_n(G_1,G_2,G_3,G_4,G_5,G_6),$ where $G_1,G_2,$ $G_3,G_4$ are given
circulant graphs and $G_5=G_6=C_n(\emptyset)$ are the empty graphs
on $n$ vertices. In this case, we take $H$ to be the graph with
vertices $v_1,v_2,v_3,v_4,v_5,v_6$ and edges
$v_1v_5,v_5v_3,v_2v_6,v_6v_4,v_5v_6.$ In the case
$G_1=C_n(i),G_2=C_n(j),G_3=C_n(k),G_4=C_n(l),$ we get the graph
$H(n;i,j,k,l)$ investigated in the paper \cite{HorBou}. Shortly, we
will write $H_n(G_1,G_2,G_3,G_4)$ ignoring the last two empty graph
entries.

Recall that the adjacency matrix of the non-empty circulant graph
$C_n(s_1,s_2,\ldots,s_k)$ on the vertices $1,2,\ldots,n$ has the
form $\sum\limits_{p=1}^{k}(\T_{n}^{s_p}+\T_{n}^{-s_p}).$ Empty
circulant graph $C_n(\emptyset)$ has zero matrix as its adjacency matrix. Suppose
that the adjacency matrix of  graph $H$ is
$$A(H)=
\left(\begin{array}{ccccc}
0 & a_{1,2} & a_{1,3} & \ldots & a_{1,m} \\
a_{2,1} & 0 & a_{2,3} & \ldots & a_{2,m} \\
  & \vdots &   &   & \vdots \\
a_{m,1} & a_{m,2} & a_{m,3} & \ldots & 0\\
\end{array}\right).$$
Then, the adjacency matrix of the circulant foliation $H_n=H_n(G_1,\,G_2,\ldots,G_m)$ over a graph $H$ with fibers $G_i=C_n(s_{i,1},\,s_{i,2},\ldots,s_{i,k_i}),\,i=1,2,\ldots,n$ is given by
$$A(H_n)=
\left(\begin{array}{ccccc}
\sum\limits_{p=1}^{k_1}(\T_{n}^{s_{1,p}}+\T_{n}^{-s_{1,p}}) & a_{1,2}I_n & a_{1,3}I_n & \ldots & a_{1,m}I_n \\
a_{2,1}I_n & \sum\limits_{p=1}^{k_2}(\T_{n}^{s_{2,p}}+\T_{n}^{-s_{2,p}}) & a_{2,3}I_n & \ldots & a_{2,m}I_n \\
  & \vdots &  &  & \vdots \\
a_{m,1}I_n & a_{m,2}I_n & a_{m,3}I_n & \ldots & \sum\limits_{p=1}^{k_m}(\T_{n}^{s_{m,p}}+\T_{n}^{-s_{m,p}})\\
\end{array}\right).$$  In the particular case, when $G_i=C_n(\emptyset)$ is the  empty graph,  we have $k_i=0.$  In this case, symbol $\sum\limits_{p=1}^{k_i}(\T_{n}^{s_{m,i}}+\T_{n}^{-s_{m,i}})$ mean   the zero $n \times n$  matrix.

Degree  matrix of graph $H_n$  given by
$$D(H_n)=
\left(\begin{array}{ccccc}
(2k_{1}+d_{1})I_n& 0 & 0 & \ldots &0 \\
0 & (2k_{2}+d_{2})I_n & 0 & \ldots & 0 \\
  & \vdots &  &  & \vdots \\
0 & 0 & 0& \ldots & (2k_{m}+d_{m})I_n\\
\end{array}\right).$$

As a result, the Laplace matrix
matrix $L=D(H_n)-A(H_n)$ of  $H_n$ is given by the formula
\begin{equation}\label{Laplacian}
L=
\left(\begin{array}{ccccc}
L_{1}(\T_{n}) & -a_{1,2}I_n & -a_{1,3}I_n & \ldots & -a_{1,m}I_n \\
-a_{2,1}I_n & L_{2}(\T_{n}) & -a_{2,3}I_n & \ldots & -a_{2,m}I_n \\
  & \vdots & &  & \vdots \\
-a_{m,1}I_n & -a_{m,2}I_n & -a_{m,3}I_n & \ldots & L_{m}(\T_{n})\\
\end{array}\right),
\end{equation}
where $L_{i}(z)=2k_{i}+d_{i}-\sum\limits_{j=1}^{k_{i}}(z^{s_{i,j}}+z^{-s_{i,j}})$  if $G_{i}\neq C_{n}(\emptyset)$ and $L_{i}(z)=d_{i}$  if $G_{i}= C_{n}(\emptyset).$  

\section{Counting characteristic polynomial for graph $H_n$}\label{counting}

Let $H$ be a finite connected graph with the set of vertices 
$V(H)=\{v_1,\,v_2,\ldots,v_m\}.$ Consider the circulant foliation
$H_n=H_n(G_1,\,G_2,\ldots,G_m),$ with based  graph $H$  and  fibres  
$G_i=C_n(s_{i,1},\,s_{i,2},\ldots,s_{i,k_i}),\,i=1,2,\ldots,m.$ 

The aim of this section is to find convenient formula for characteristic polynomial of the Laplace matrix $L$ of $H_n$ defined as  $\chi_n(\lambda) = \det(L - \lambda I),$ where $I$ is the identity matrix of order equal to the number $m n$ of vertices of $H_n.$ The most important case for application is when the size of based graph $H$ is fixed, but the size $n$ of each fibre of foliation is going to the infinity. Then the matrix $L - \lambda I$ is quite large and direct calculation of its determinant became a serious problem. To avoid this, we prefer to calculate  the determinant of generalized Laplacian $L(H,\,X)$ for the based graph $H$ and particular set of variables $X=\{x_1,x_2, \ldots,x_m\}.$ Then the finite product of such determinant whose length does not depend on large $n$ gives us a possibility to find $ \chi_n(\lambda).$
 
We fix some complex number $\lambda$ which is supposed to be an argument of $ \chi_n(\lambda).$ Later, we introduce Laurent polynomial $P_\lambda(z)$ and ordinary polynomial $Q_\lambda(w)$ associated with the matrix  $L - \lambda I.$ They will be related by the equation $P_\lambda(z)=Q_\lambda(w)$ for $w=\frac12(z+\frac{1}{z}).$
 
To do this, we consider generalized Laplacian $L(H,\,X)$ of graph $H$ with 
the set of variables $X=(x_1,x_2,\ldots,x_m).$ We specify $X$ by setting
$x_{i}=2k_{i}+d_i-\lambda-\sum\limits_{j=1}^{k_i}(z^{s_{i,j}}+z^{-s_{i,j}}),$
where $d_i$ is the degree of $v_i$ in $H.$ 

We set $P_\lambda(z)=\det(L(H,\,X)).$ By definition of circulant graphs, if 
$G_{i}\neq C_{n}(\emptyset)$ we have $1\leq s_{i,1}<s_{i,2}<\cdots<s_{i,k_{i}}.$ 
In this case, $x_{i}$ is a Laurent polynomial in $z$ with leading term $-z^{s_{i,k_{i}}}.$ 
For $G_{i}=C_{n}(\emptyset)$ we have $x_{i}=d_{i}-\lambda.$ This is a constant polynomial 
in $z.$ We note that $P_\lambda(z)$ is also a Laurent polynomial.

For further considerations, we need to investigate the structure of leading term 
of the polynomial $P_\lambda(z).$ This term comes from the product of polynomials
$x_{i}=2k_{i}+d_{i}-\lambda-\sum\limits_{j=1}^{k_{i}}(z^{s_{i,j}}+z^{-{s_{i,j}}})$
with $s_{i,k_{i}}>0$ and determinant of the matrix, obtained from $L(H,\,X)$ 
removing all rows and columns corresponding to the non-empty circulant fibers. 
One can use the following lemma to calculate the leading term explicitly.

\begin{lemma}\label{leadcoef}
Let $V^{\prime}=(v_1,v_2,\ldots,v_{m^{\prime}})$ be the subset
(possibly empty) of vertices of the graph $H$ with trivial circulant
fibers $C_{n}(\emptyset).$ Let $H^{\prime}$ be the subgraph of graph $H$ induced by $V^{\prime}.$ Then the leading term of the polynomial $P_{\lambda}(z)$ is given by the following formula
$(-1)^{m-m^{\prime}}\eta\,z^{s},$ where $\eta=\eta(\lambda)$ is an integer polynomial in $\lambda$ given by the formula
$$\eta(\lambda)=\det(L(H^{\prime},X^{\prime})),\,s=\sum\limits_{j=1}^{m}s_{j,k_{j}},\,
X^{\prime}=(d_{1}-\lambda,d_{2}-\lambda,\ldots,d_{m^{\prime}}-\lambda),$$ 
and $d_{j}$ is a valency of vertex $v_j$ in graph $H.$
\end{lemma}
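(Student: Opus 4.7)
The plan is to expand $P(z)=\det(L(H,X))$ via the Leibniz formula and track which summands survive in the top $z$-degree. First I would classify the entries of $L(H,X)$: off-diagonal entries $-a_{i,j}$ are $z$-independent constants; for $v_i\in V'$ the diagonal entry $x_i=d_i+1$ is also a constant; for $v_i\notin V'$ the diagonal entry $x_i$ is a Laurent polynomial whose leading monomial is $-z^{s_{i,k_i}}$, since the assumption $1\le s_{i,1}<\cdots<s_{i,k_i}$ forces the top power of $z$ in $x_i$ to come solely from the term $-z^{s_{i,k_i}}$.

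The key observation is that in a Leibniz term $\mathrm{sgn}(\sigma)\prod_i L(H,X)_{i,\sigma(i)}$, positive powers of $z$ can arise only from diagonal entries at non-trivial vertices, and each such entry contributes at most $z^{s_{i,k_i}}$. Therefore every summand has $z$-degree at most $s=\sum_{j=1}^{m}s_{j,k_j}$ (the trivial fibers contribute $s_{i,1}=0$), with equality exactly when $\sigma$ fixes every non-trivial vertex and we select the leading monomial $-z^{s_{i,k_i}}$ from each such diagonal entry.

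Collecting the top-degree contributions, the permutations reaching the bound are precisely those induced by an arbitrary $\tau\in S_{V'}$ extended by the identity, and $\mathrm{sgn}(\sigma)=\mathrm{sgn}(\tau)$. Summing these yields
\[
(-1)^{m-m'}\, z^{s}\, \det M,
\]
where $M$ is the principal submatrix of $L(H,X)$ indexed by $V'$. I would then verify that $M=L(H',X')$: its diagonal equals $X'=(d_1+1,\ldots,d_{m'}+1)$ by definition of $x_i$ at trivial fibers, and its off-diagonal entries $-a_{i,j}$ with $v_i,v_j\in V'$ match $-A(H')_{i,j}$ because $H'$ is the vertex-induced subgraph on $V'$. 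Hence $\det M=\eta$ and the leading term $(-1)^{m-m'}\eta\, z^{s}$ follows. The only mild pitfalls are the sign count and the fact that $d_j$ in $X'$ denotes the valency in $H$ rather than in $H'$; both are forced on us by extracting the submatrix of $L(H,X)$ directly, so no substantive obstacle arises beyond careful bookkeeping of leading monomials.
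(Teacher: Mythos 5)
Your proposal is correct and follows essentially the same route as the paper: classify the entries of $L(H,X)$ by whether they depend on $z$, note that the top power of $z$ in each nontrivial diagonal entry is $-z^{s_{i,k_i}}$, and extract the coefficient of $z^{s}$ as $(-1)^{m-m^{\prime}}$ times the principal minor of $L(H,X)$ on $V^{\prime}$, which is exactly $(-1)^{m-m^{\prime}}\eta$. The one point you leave implicit is that $\eta\neq 0$ --- without this the actual leading term could sit in lower degree --- which the paper settles by observing that $L(H^{\prime},X^{\prime})$ is symmetric and strictly diagonally dominant, so $\eta>0$ by the Gershgorin circle theorem, with the convention $\eta=1$ when $m^{\prime}=0$.
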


\begin{proof}
By definition, $P_\lambda(z)=\det(L(H,X)),$ where $X=(x_{1},x_{2},\ldots,x_{m}),\,x_{j}=2k_{i}+d_{i}-\lambda-\sum\limits_{j=1}^{k_{i}}(z^{s_{i,j}}+z^{-s_{i,j}}).$ For $j=1,2,\ldots,m^{\prime}$ we have $x_{j}=d_{j}-\lambda$ as $k_j=0.$ If $j=m^{\prime}+1,\ldots,m,$ the leading terms of the polynomial $x_j$ is the $-z^{s_{j,k_{j}}}.$ Since all other entries of matrix $L(H,X)$ are integer polynomials in $z$ and $\lambda,$ by making use of basic properties of determinants, we  get the statement of lemma. \end{proof}

\begin{remark}\label{eta1}
If there are no trivial fibers $C_{n}(\emptyset)$ in circulant foliation $H_{n},$ that is $m^{\prime}=0,$ we always have $\eta=1.$ Also, if all fibers in circulant foliation $H_{n}$ are trivial, then $\eta(\lambda)=\det(L(H)-\lambda I)$ coincides with the characteristic polynomial of the base graph $H.$
\end{remark}

Now, we consider one more specification $L(H,\,W)$ for generalized
Laplacian of $H$ with the set  $W=(\omega_1,\omega_2,\ldots,\omega_m),$ where
$\omega_{i}=2k_{i}+d_i-\lambda-\sum\limits_{j=1}^{k_i}2\,T_{s_{i,j}}(w)$ and
$T_k(w)=\cos(k\arccos w)$ is the Chebyshev polynomial of the first
kind. See \cite{MasHand} for the basic properties of the Chebyshev
polynomials. We set $Q_\lambda(w)=\det(L(H,\,W)).$ Then $Q_\lambda(w)$ is a polynomial in $w$ 
of degree $s=s_{1,k_1}+s_{2,k_2}+\cdots+s_{m,k_m}.$ Since
$\frac{1}{2}(z^n+z^{-n})=T_n(\frac{1}{2}(z+z^{-1})),$ we have
$P_\lambda(z)=Q_\lambda(w),$ where $w=\frac{1}{2}(z+z^{-1}).$ By definition of
$Q_\lambda(w)$ we have
\begin{equation}\label{formula1}
Q_\lambda(w)=\det\left(
\begin{array}{ccccc}
\omega_1 & -a_{1,2} & -a_{1,3} & \ldots & -a_{1,m} \\
-a_{2,1} & \omega_2 & -a_{2,3} & \ldots & -a_{2,m} \\
  & \vdots &  &  & \vdots \\
-a_{m,1} & -a_{m,2} & -a_{m,3} & \ldots & \omega_m
\end{array}\right),\end{equation}
where $\omega_{i}=2k_i+d_i-\lambda-\sum\limits_{j=1}^{k_i}2T_{s_{i,j}}(w),\,i=1,2,\ldots,m.$

\begin{remark}\label{rm1} In notation of Lemma~\ref{leadcoef}, the leading term of the polynomial $Q_\lambda(w)$ is $(-1)^{m-m^{\prime}}2^{s}\eta\,w^{s}.$ Equivalently, up to sign,
$\eta=\eta(\lambda)$ is the leading  coefficient of the polynomial  
$Q_\lambda(\frac{w}{2})$ in $w.$ 

It should be noted that  $\widetilde{Q}(\lambda, w)=Q_\lambda(\frac{w}{2})$ is   a polynomial in two variables $\lambda$  and $w$ with integer coefficients.
\end{remark}
\bigskip

The main result of this section is the following theorem.   
\begin{theorem}\label{theorem1}
Characteristic polynomial $\chi_n(\lambda) = \det(L - \lambda I)$ of the Laplace matrix $L=L(H_n)$ of graph
$H_{n}(G_1,\,G_2,\ldots, G_m),$ up to sign, is given by the formula
$$\eta(\lambda)^{n}\prod_{p=1}^{s}(2T_n(w_p)-2),$$ where
$s=s_{1,k_1}+s_{2,k_2}+\cdots+s_{m,k_m},\ w_p,\,p=1,2,\ldots,s$ are all the roots of the equation $Q_\lambda(w)=0$ and $\eta=\eta(\lambda)$ is the leading coefficient of the polynomial  
$Q_\lambda(\frac{w}{2})$ in $w.$
\end{theorem}

\begin{proof}
 By the formula (\ref{Laplacian})  we get
$$L-\lambda I=
\left(\begin{array}{ccccc}
\Lambda_{1}(\T_{n}) & -a_{1,2}I_n & -a_{1,3}I_n & \ldots & -a_{1,m}I_n \\
-a_{2,1}I_n & \Lambda_{2}(\T_{n}) & -a_{2,3}I_n & \ldots & -a_{2,m}I_n \\
  & \vdots & &  & \vdots \\
-a_{m,1}I_n & -a_{m,2}I_n & -a_{m,3}I_n & \ldots & \Lambda_{m}(\T_{n})\\
\end{array}\right),$$
where $\Lambda_{i}(z)=2k_{i}+d_{i}-\lambda-\sum\limits_{j=1}^{k_{i}}(z^{s_{i,j}}+z^{-s_{i,j}}),\,i=1,\ldots,m.$

Our  aim is to find $ \chi_n(\lambda) = \det(L - \lambda I).$  The latter is just the product of eigenvalues of  matrix  $L - \lambda I.$ 

The eigenvalues of circulant matrix $\T_{n}$ are
$\varepsilon_{n}^{j},\,j=0,1,\ldots,n-1,$ where
$\varepsilon_n=e^\frac{2\pi i}{n}.$ Since all of them are distinct,
the matrix $\T_{n}$ is conjugate to the diagonal matrix
$\mathbb{T}_{n}=diag(1,\varepsilon_{n},\ldots,\varepsilon_{n}^{n-1})$
with diagonal entries
$1,\varepsilon_{n},\ldots,\varepsilon_{n}^{n-1}$. To find spectrum
of $L-\lambda I,$ without loss of generality, one can replace $\T_{n}$ with
$\mathbb{T}_{n}.$ Then all $n\times n$ blocks of the above matrix are diagonal
matrices. This essentially simplifies the problem of finding
eigenvalues. Indeed, let $\mu$ be an
eigenvalue of $L- \lambda I$ and let $(u_{1},u_{2},\ldots, u_{m})$ with
$u_{i}=(u_{i,1},u_{i,2}\ldots,u_{i,n})^t,\,i=1,\ldots,m$ be the
respective eigenvector. Then we have the following system of
equations
\begin{equation}\label{equationL1}\left(\begin{array}{ccccc}
\Lambda_{1}(\mathbb{T}_{n}) -\mu\,I_{n}& -a_{1,2}I_{n} & -a_{1,3}I_{n} & \ldots & -a_{1,m}I_{n} \\
-a_{2,1}I_{n} & \Lambda_{2}(\mathbb{T}_{n})-\mu\,I_{n} & -a_{2,3}I_{n} & \ldots & -a_{2,m}I_{n} \\
 & \vdots & &  & \vdots \\
-a_{m,1}I_{n} & -a_{m,2}I_{n} & -a_{m,3}I_{n} & \ldots & \Lambda_{m}(\mathbb{T}_{n})-\mu\,I_{n}\\
\end{array}\right)
\left(\begin{array}{c}
u_{1}\\
u_{2}\\
\vdots \\
u_{m}\\
\end{array}\right)=0.\end{equation}
Recall that all blocks in the matrix under consideration are diagonal
$n\times n$-matrices and the $(j,j)$-th entry of $\mathbb{T}_{n}$ is equal to $\varepsilon_n^{j-1}.$

Hence, the equation (\ref{equationL1}) splits into $n$ equations

\smallskip

\begin{equation}\label{equationL2}\left(\begin{array}{ccccc}
\Lambda_{1}(\varepsilon_n^{j}) -\mu& -a_{1,2}  & -a_{1,3}  & \ldots & -a_{1,m}  \\
-a_{2,1} & \Lambda_{2}(\varepsilon_n^{j})-\mu  & -a_{2,3}  & \ldots & -a_{2,m}  \\
 & \vdots & &  & \vdots \\
-a_{m,1}  & -a_{m,2}  & -a_{m,3}  & \ldots & \Lambda_{m}(\varepsilon_n^{j})-\mu \\
\end{array}\right)
\left(\begin{array}{c}
u_{1,j+1}\\
u_{2,j+1}\\
\vdots \\
u_{m,j+1}\\
\end{array}\right)=0,\end{equation}
$j=0,1,\ldots,n-1$. Each equation gives $m$ eigenvalues of $L-\lambda I,$  say $\mu_{1,j},\mu_{2,j},\ldots,\mu_{m,j}.$ To find these eigenvalues we set

\smallskip

\begin{equation}\label{equationL23}S(z,\mu)=\det \left(\begin{array}{ccccc}
\Lambda_{1}(z) -\mu & -a_{1,2}  & -a_{1,3}  & \ldots & -a_{1,m}  \\
-a_{2,1} & \Lambda_{2}(z)-\mu  & -a_{2,3}  & \ldots & -a_{2,m}  \\
 & \vdots & &  & \vdots \\
-a_{m,1}  & -a_{m,2}  & -a_{m,3}  & \ldots & \Lambda_{m}(z)-\mu \\
\end{array}\right).
\end{equation}

Then $\mu_{1,j},\mu_{2,j},\ldots,\mu_{m,j}$ are roots of the equation
\begin{equation}S(\varepsilon_n^{j},\mu)=0.\end{equation}
In particular, by Vieta's theorem, the product
$\kappa_{j}=\mu_{1,j}\mu_{2,j}\cdots\mu_{m,j}$ is given by
the formula $\kappa_{j}=S(\varepsilon_n^j,0)=P_\lambda(\varepsilon_n^j),$  where
$P_\lambda(z)$ is defined at the beginning of this section.

Now, for any $j=0,\ldots, n-1,$ matrix $L-\lambda I$ has $m$ eigenvalues
$\mu_{1,j},\mu_{2,j},\ldots,\mu_{m,j}$ satisfying the order $m$ algebraic 
equation $S(\varepsilon_n^{j},\mu)=0.$ One can see that $P_\lambda(z)=S(z,0)$ 
is the characteristic polynomial for generalized Laplacian $L(H,X),$ where 
$X=(x_{1},\ldots, x_{m}),$ and  $x_{i}=2k_{i}+d_i-\lambda-\sum\limits_{j=1}^{k_i}(z^{s_{i,j}}+z^{-s_{i,j}}).$

In particular, $P_\lambda(\varepsilon_n^j)$ is the characteristic polynomial for generalized 
Laplacian $L(H,X_j),$  where $X_j=(x_{1j},\ldots, x_{m j})$ with  
$$x_{i j}=2k_{i}+d_i-\lambda-\sum\limits_{j=1}^{k_i}(\varepsilon_n^{j\, s_{i,j}}+\varepsilon_n^{-j\,s_{i,j}}).$$  
For $j=0$ we have $X_0=(d_1-\lambda,\ldots, d_m-\lambda)$ and 
$$P_\lambda(1)=\det L(H,X_0)=\chi_H(\lambda).$$
Now we get
\begin{equation}\label{tauH}
\chi_n(\lambda)=\prod\limits_{j=0}^{n-1}\mu_{1,j}\mu_{2,j}\cdots\mu_{m,j}=
\prod\limits_{j=0}^{n-1}\kappa_j=\prod\limits_{j=0}^{n-1}P_\lambda(\varepsilon_n^j).
\end{equation}

Note that the product of eigenvalues
$\kappa_0=P_\lambda(1)=\mu_{1,0}\mu_{2,0}\cdots\mu_{m,0}$ is equal to $\chi_H(\lambda ),$ 
where $\chi_H(\lambda )$ is the Laplace characteristic polynomial of graph $H.$ Hence
\begin{equation}\label{multiple}
\chi_n(\lambda )=\chi_H(\lambda )\prod\limits_{j=1}^{n-1}P_\lambda(\varepsilon_{n}^{j}).
\end{equation}
By definition, $\eta(\lambda)$ is the leading coefficient of two variable Laurent polynomial 
$P_\lambda(z)$ in $z.$ So, $\eta(\lambda)\neq 0.$ 
To continue the proof, we replace the Laurent polynomial $P_\lambda(z)$ by
$\widetilde{P}_\lambda(z)=(-1)^{m-m^{\prime}}\frac{z^{s}}{\eta(\lambda)}P_\lambda(z),$ where
$m,\,m^{\prime}$ and $\eta(\lambda)$ are given by Lemma~\ref{leadcoef}. Then
$\widetilde{P}_\lambda(z)$ is a monic polynomial in $z$ of the degree $2s$ with the
same roots as $P_\lambda(z).$ We note that
\begin{equation}\label{newP}
\eta(\lambda)^{n}\prod\limits_{j=0}^{n-1}\widetilde{P_\lambda}(\varepsilon_{n}^{j})=(-1)^{(m-m^{\prime})n}\varepsilon_{n}^{\frac{(n-1)n}{2}s} \prod\limits_{j=0}^{n-1}P_\lambda(\varepsilon_{n}^{j})=(-1)^{(m-m^{\prime})n+s(n-1)}\prod\limits_{j=0}^{n-1}P_\lambda(\varepsilon_{n}^{j}).
\end{equation}

All roots of polynomials $\widetilde{P_\lambda}(z)$ and $Q_\lambda(w)$ are
$z_{1},1/z_{1},z_{2},1/z_{2},\ldots,z_{s},1/z_{s}\textrm{ and
}w_{j}=\frac{1}{2}(z_{j}+z_{j}^{-1}),\,j=1,\ldots,s,$ respectively.
Also, we can recognize the complex numbers $\varepsilon_{n}^{j},\,j=0,\ldots,n-1$ 
as the roots of polynomial $z^n-1.$ By the basic properties of resultant (\cite{PrasPol},
Ch.~1.3) we have
\begin{eqnarray}\label{Hlemma}
\nonumber &&\prod\limits_{j=0}^{n-1}\widetilde{P_\lambda}(\varepsilon_{n}^{j})=\textrm{Res}(\widetilde{P_\lambda}(z),z^{n}-1) =\textrm{Res}(z^{n}-1,\widetilde{P_\lambda}(z))\\
&&=\prod\limits_{z:\widetilde{P_\lambda}(z)=0}(z^{n}-1)=\prod\limits_{j=1}^{s}(z_{j}^{n}-1)(z_{j}^{-n}-1)
\\
\nonumber &&=\prod\limits_{j=1}^{s}(2-z_{j}^{n}-z_{j}^{-n})=(-1)^{s}\prod\limits_{j=1}^{s}(2T_{n}(w_{j})-2).
\end{eqnarray}
Combine (\ref{tauH}), (\ref{newP}) and (\ref{Hlemma}) we have the following formula 
 \begin{equation}\label{before}
\chi_n(\lambda)=(-1)^{(s+m-m^{\prime})n}\eta(\lambda)^{n}\prod\limits_{j=1}^{s}(2T_{n}(w_{j})-2).
\end{equation}

We finish the proof of the theorem.
\end{proof}
The following corollary is well known. See papers \cite{FengKwakLee, MizunoSato} and others.
\bigskip

\begin{corollary} Characteristic polynomial $ \chi_n(\lambda)$ of graph
$H_{n}(G_1,\,G_2,\ldots, G_m)$ is a multiple of characteristic polynomial $\chi _H(\lambda)$ of graph $H.$
\end{corollary}

\begin{proof}
Indeed, by formula~(\ref{multiple}) we have to show that the product
$R=\prod\limits_{j=1}^{n-1}P_\lambda(\varepsilon_n^j)$ is a  polynomial in $\lambda.$ This
follows from the fact that $R=\textrm{Res}(P_{\lambda}(z),\frac{z^{n}-1}{z-1})$ and Vieta's theorem.    
\end{proof}

\section{Finding a square in a characteristic polynomial}\label{arithmetic}

Let $H$ be a finite connected graph on $m$ vertices. Consider the
circulant foliation $H_n=H_n(G_1,\,G_2,\ldots,G_m),$ where
$G_i=C_n(s_{i,1},\,s_{i,2},\ldots,s_{i,k_i}),\,i=1,2,\ldots,m.$ Denoted by $Q_\lambda(w)$ the polynomial given in the formula (\ref{formula1}).
\bigskip

\begin{theorem}\label{lorenzini}  
Let $\chi_n(\lambda) $ and $ \chi _{H}(\lambda)$ be the Laplace characteristic polynomials of graphs
$H_n$ and  $H$  respectively. Then there exists a sequence of integer polynomials $a_n(\lambda)$ such that
\begin{enumerate}
\item[$1^{\circ}$] $\chi_n(\lambda) = \chi _{H}(\lambda)\,a_n(\lambda)^{2}$ if $n$ is odd,
\item[$2^{\circ}$] $\chi_n(\lambda) =Q_\lambda(-1)\chi _{H}(\lambda)\,a_n(\lambda)^{2}$ if $n$ is even.
 \end{enumerate}

\end{theorem}

\begin{proof} By
formula~(\ref{multiple}) we have
$\chi_n(\lambda)=\chi _{H}(\lambda)\prod_{j=1}^{n-1}P_\lambda(\varepsilon_{n}^{j}).$ Note that
$P_\lambda(\varepsilon_{n}^{j})=
P_\lambda(\varepsilon_{n}^{n-j})$ for $j=1,\ldots, n-1.$

Define $c_{n}(\lambda)=\prod\limits_{j=1}^{\frac{n-1}{2}}P_\lambda(\varepsilon_{n}^{j})$
if $n$ is odd and $d_{n}(\lambda)=\prod\limits_{j=1}^{\frac{n}{2}-1}P_\lambda(\varepsilon_{n}^{j})$
if $n$ is even. Since $P_\lambda(\varepsilon_n^j)$ is the characteristic polynomial for generalized Laplacian $L(H,X_j),$ each algebraic number $\lambda$ is contained in the above products together with all of its Galois conjugate elements \cite{Lor}. Therefore, the products $c_{n}(\lambda)$ and $d_{n}(\lambda)$  are integer polynomials  in $\lambda$.

Moreover, if $n$ is even we get $P_\lambda(\varepsilon_{n}^{\frac{n}{2}})=P_\lambda(-1)=Q_\lambda(-1).$
We note that $Q_\lambda(-1)$ is always an integer polynomial in $\lambda.$ The exact formula for it is given below in Remark~\ref{rm2}.

Now, we have $\chi_n(\lambda) =\chi _{H}(\lambda)\,c_{\lambda}(n)^{2}$  if $n$ is odd, and $\chi_n(\lambda) =Q_\lambda(-1)\chi _{H}(\lambda)\,d_{\lambda}(n)^{2}$  if $n$ is even. Putting $a(n)=c_{\lambda}(n)$ in the first case, and $a(n)=d_{\lambda}(n)$  in the
second we finish the proof of the theorem.
\end{proof}

\begin{remark}\label{rm2} We note that  $Q_\lambda(-1)$ is  an integer  polynomial in $\lambda.$
Indeed, $Q_\lambda(w)=\det\,L(H,W),$ where $W=(w_{1},w_{2},\ldots,w_{m})$
and $w_{i}=2k_{i}+d_{i}-\lambda-\sum\limits_{j=1}^{k_{i}}2T_{s_{i,j}}(w).$
Denoted by $t_i$ the number of odd elements in the sequence
$s_{i,1},\,s_{i,2},\ldots,s_{i,k_i}.$ Since $T_{s_{i,j}}(-1)=\cos(s_{i,j}\arccos(-1))=\cos(s_{i,j}\pi)=(-1)^{s_{i,j}},$ for $w=-1$ we have
$$w_i=d_{i}-\lambda+4\sum\limits_{j=1}^{k_{i}}\frac{1-(-1)^{s_{i,j}}}{2}=d_{i}+4t_{i}-\lambda.$$
Hence, $Q_\lambda(-1)=\det\,L(H,W),\text{ where }W=(d_{1}+4t_{1}-\lambda,d_{2}+4t_{2}-\lambda,\ldots,d_{m}+4t_{m}-\lambda).$
\end{remark}

\section{Application}\label{application}

\subsection{Geometrical meaning of coefficients of $\chi_n(\lambda).$}\label{coefficients}

A {\it tree} is a connected undirected graph without cycles. A {\it spanning tree} 
in a graph $G$ is a subgraph that is a tree containing all the vertices of $G.$ 
A {\it rooted tree} is a tree with one marked vertex called root. A {\it rooted forest} 
is a graph whose connected components are rooted trees. A {\it rooted spanning
forest} $F$ in a graph $G$ is a subgraph that is a rooted forest containing all the 
vertices of $G.$ 

Let $G$ be a graph on $n$ vertices. Let $\chi_{G}(\lambda)=\det(L(G)-\lambda\,I_{n})$ be the characteristic polynomial of matrix $L(G),$ which is Laplace matrix of graph $G.$ Its extended form is
$$\chi_{G}(\lambda)=(-1)^n \lambda^n+c_{n-1}\lambda^{n-1}+\ldots+c_1\lambda.$$ 
By celebrated Kirchhoff theorem, the number $\tau(G)$ of  spanning trees in connected  $G$ is the product of non-zero roots of $\chi_{G}(\lambda)$  divided by the number of vertices of $G.$  
That is $$\tau(G)=-\frac{\chi_{G}^{\prime}(0)}{n}=-\frac{c_1}{n}.$$

We note that since all eigenvalues of $L(G)$ are non-negative, the sequence of coefficients 
$c_1, \ldots, c_{n-1}, c_n=(-1)^n$ so alternating. The theorem by Kelmans and Chelnokov 
\cite{KelChel} states that the absolute value of coefficient $c_{k}$ of $\chi_{G}(\lambda)$ 
coincides with the number of rooted spanning $k$-forests in the graph $G.$ 

So, the number of rooted spanning forests of the graph $G$ can be found by the formula
\begin{eqnarray}\label{KelmCheln}
f(G)&=&f_{1}+f_{2}+\ldots+f_{n}=|c_{1}-c_{2}+c_{3}-\ldots+(-1)^{n-1}|\\
\nonumber &=&\det(I_{n}+L(G))=\chi_{G}(-1).
\end{eqnarray}

This result was independently obtained by many authors: see, for example, \cite{ChebSham}, \cite{Knill}, and \cite{GrunMed}.  


Let us call a graph $k$-colored if its edges can have $k$ colors. The following two theorems were proved by  Oliver Knill in \cite{Knill}.

\begin{theorem}\label{forestcol} (Forest Coloring Theorem). For a finite simple graph $G$ with Laplacian $L,$ the integer $det(I + k L)$ is the number of rooted $k$-colored spanning forests contained in $G.$
\end{theorem}

One can also look at $k = -1,$ in which case we count forests with odd number of trees with a negative sign. Lets call a graph “even” if it has an even number of edges, and “odd” if it has an odd number of edges.

\begin{theorem}\label{superforestcol}(Super Forest Coloring Theorem). For a finite simple graph $G$ with Laplacian $L,$ the integer $det(I - k L)$ is the number of $k$-colored rooted even spanning forests minus the number of $k$-colored rooted odd spanning forests in $G.$
\end{theorem}

\subsection{Enumeration of the spanning trees}\label{trees}
Let $\tau(n)$ be the number of spanning trees in the circulant foliation $H_n=H_n(G_1,\,G_2,\ldots,G_m).$ In this point we suppose that $H_n$  is connected.

Since the graph $H_n$ has $nm$ vertices, by Kirchhoff theorem, we have 
$\tau(n)=-\frac{\chi_n^{\prime}(0)}{n m}.$ By Theorem \ref{theorem1}, we get
$\chi_n(\lambda)=\eta(\lambda)^{n}\prod_{p=1}^{s}(2T_n(w_p)-2),$ where
$s=s_{1,k_1}+s_{2,k_2}+\cdots+s_{m,k_m}$ and $w_p=w_p(\lambda),\,p=1,2,\ldots,s$ are
all the roots of the equation $Q_\lambda(w)=0.$ Let $w=w(\lambda)$ be a root of equation $Q_\lambda(w)=0.$ If $\lambda=0,$ the properties of polynomial $Q_0(w)$ are well known. They are represented in the papers \cite{KwonMedMed18, KwonMedMed20, MedMedUMN2023}. In particular, $Q_0(1)=0$ and $Q_0^{\prime}(1)<0.$ So, $Q_0(w)$ has a simple root, say $w_1(0)=1$ and the others roots $w_2(0),\ldots, w_m(0)$ are differ from $1.$  

We set $J(w)=2T_n(w)-2.$ Note that $J(w_1(0))=J(1)=0.$ Also, $\eta(0)\neq 0.$ This inequality follows from Theorem~\ref{theorem1} and the fact that characteristic polynomial $\chi_{n}(\lambda)$ of connected graph $H_n$ has a simple root at $\lambda=0.$ Then, in a small neighborhood of $\lambda=0,$ we have $\frac{1}{\eta(\lambda)^{n}}\chi_{n}(\lambda)=J(w_1)J(w_2)\cdots J(w_s).$ Therefore
$$\frac{1}{\eta(\lambda)^{n}}\chi_{n}^{\prime}(\lambda)-\frac{\eta(\lambda)^{\prime}}{\eta(\lambda)^{n+1}}\chi_{n}(\lambda)=
J^{\prime}(w_1)\,w_1^{\prime}(\lambda)\,J(w_2)\,\ldots\,J(w_s)+J(w_1)\,J^{\prime}(w_2)\,w^{\prime}_{2}(\lambda)\,\ldots\,J(w_s)+\ldots$${ }$$+J(w_1)\,J(w_2)\,\ldots\,J^{\prime}(w_s)\,w^{\prime}_{s}(\lambda).$$ 
Since $w_1(0)=1,\,J(1)=0$ and $\chi_{n}(0)=0,$ we have 

\begin{equation}\label{eta}\frac{1}{\eta(0)^{n}}\chi_{n}^{\prime}(0)=J^{\prime}(1)\cdot w_1^{\prime}(0) \cdot J(w_2)\cdot\ldots\cdot J(w_s),\end{equation}

For some technical reasons, we will use notation $Q(\lambda,w(\lambda))$ instead of previous $Q_\lambda(w).$ Now our aim is to find $w^{\prime}(\lambda),$ where $w(\lambda)$ is a root of the equation  $Q(\lambda,w(\lambda))=0.$

From formula (\ref{formula1}) we obtain   the 
$$(Q(\lambda,w(\lambda))^{\prime}_{\lambda}=\det\left(
\begin{array}{ccccc}
({\omega_{1}})^{\prime}_{\lambda}& -a_{1,2} & -a_{1,3} & \ldots & -a_{1,m} \\
0 & \omega_{2} & -a_{2,3} & \ldots & -a_{2,m} \\
  & \vdots &  &  & \vdots \\
0 & -a_{m,2} & -a_{m,3} & \ldots & \omega_{m}
\end{array}\right)$${}$$+\det\left(
\begin{array}{ccccc}
\omega_{1} & 0 & -a_{1,3} & \ldots & -a_{1,m} \\
-a_{2,1} & ({\omega_{2}})^{\prime}_{\lambda} & -a_{2,3} & \ldots & -a_{2,m} \\
  & \vdots &  &  & \vdots \\
-a_{m,1} & 0& -a_{m,3} & \ldots &\omega_{m}
\end{array}\right)+\ldots + \det\left(
\begin{array}{ccccc}
\omega_{1} & -a_{1,2} & -a_{1,3} & \ldots & 0 \\
-a_{2,1} & \omega_{2} & -a_{2,3} & \ldots & 0\\
  & \vdots &  &  & \vdots \\
-a_{m,1} & -a_{m,2} & -a_{m,3} & \ldots & ({\omega_{m}})^{\prime}_{\lambda}
\end{array}\right)$${}$$=({\omega_{1}})^{\prime}_{\lambda}Q_{1,1}(w) +({\omega_{2}})^{\prime}_{\lambda}Q_{2,2}(w) + . . . + ({\omega_{m}})^{\prime}_{\lambda}Q_{m,m}(w),$$
where $w=w(\lambda),\,\omega_{i}=2k_i+d_i-\lambda-\sum\limits_{j=1}^{k_i}2T_{s_{i,j}}(w),\,i=1,2,\ldots,m$ and $Q_{i,i}(w)$ is the $(i, i)$-the minor of the matrix in formula (\ref{formula1}).  By  making use of the identity $T^{\prime}_{s}(w)=s U_{s-1}(w),$ we have
$$(\omega_{i})^{\prime}_\lambda=-(1+\sum\limits_{j=1}^{k_i}2 s_{i,j} U_{s_{i,j}-1}(w) w^{\prime}(\lambda)).$$  Hence, since $w=w(\lambda)$  is a root of the equation  $Q(\lambda,w(\lambda))=0,$ we obtain     

\begin{equation}\label{Q}Q(\lambda,w(\lambda))^{\prime}_{\lambda}=-\sum_{i=1}^m(1+\sum\limits_{j=1}^{k_i}2s_{i,j} U_{s_{i,j}-1}(w) w^{\prime}(\lambda)) Q_{i,i}(w)=0.
\end{equation}

We restrict ourself by very particular root $w(\lambda)=w_1(\lambda)$ of equation   $Q(\lambda,w(\lambda))=0,$ satisfying the condition $w_1(0)=1.$ We put $\lambda=0.$ Then $w=w_1(0)=1$ and the matrix in formula (\ref{formula1}) coincides with the Laplacian of graph $H. $ By the Kirchhoff Matrix-Tree theorem we have
$$Q_{1,1}(1) = Q_{2,2}(1) = \ldots = Q_{m,m}(1) = \tau(H),$$ where $\tau(H)$ is the number of spanning trees in the graph $H.$ Moreover, by basic properties of Chebyshev polynomials, $U_{s_{i,j}-1}(1)=s_{i,j}.$  Then, by equation (\ref{Q}) for $\lambda=0$ we have
\begin{equation}\label{QW}(m+\sum_{i=1}^m\sum\limits_{j=1}^{k_i}2s_{i,j}^2 w^{\prime}(0))\tau(H)=0.
\end{equation}
Since $H_{n}$ is connected, the base graph $H$ is also connected. Hence, the number of spanning trees $\tau(H)>0.$ As a result, we get
\begin{equation}\label{wprime}w^{\prime}(0)=w^{\prime}_1(0)=-\frac{m}{
\sum\limits_{i=1}^m\sum\limits_{j=1}^{k_i}2s_{i,j}^2}.
\end{equation}

Since $J(w)=2T_n(w)-2,$ we have $J^{\prime}(w)=2n U_{n-1}(w)$ and $J^{\prime}(1)=
2n^2.$ Then, by formulas (\ref{eta}) and (\ref{wprime}) we obtain
$$\tau(n)=-\frac{\chi^{\prime}_n(0)}{n\,m}=-\frac{\eta(0)^{n}}{n\,m}\cdot J^{\prime}(1)\cdot w_1^{\prime}(0) \cdot J(w_2)\cdot\ldots\cdot J(w_s)$$
$$=-\frac{\eta(0)^{n}}{n\,m} \cdot 2n^2\cdot (-\frac{m}{
\sum\limits_{i=1}^m\sum\limits_{j=1}^{k_i}2s_{i,j}^2})\cdot J(w_2)\cdot\ldots\cdot J(w_s)$${}
$$=\frac{n\cdot \eta(0)^{n}}{q} \cdot J(w_2)\cdot\ldots\cdot J(w_s),$$
where $q=\sum\limits_{i=1}^m\sum\limits_{j=1}^{k_i}s_{i,j}^2$ and $w_2=w_2(0),\ldots, w_s=w_s(0)$ are  all the roots of the equation $Q_0(w)=0$ different from $1.$

As a result, we have the following theorem obtained earlier in \cite{KwonMedMed18} and \cite{MedMedUMN2023}.   

\begin{theorem}\label{theoremtrees}
The number of spanning trees $ \tau(H_n)$ in circulant foliation  
$$H_n=H_{n}(G_1,\,G_2,\ldots, G_m)$$ is given by the formula
$$\tau(H_n)=\frac{n \,|\eta(0)|^{n}}{q}\prod_{p=2}^{s}|2T_n(w_p)-2|,$$ where
$s=s_{1,k_1}+s_{2,k_2}+\cdots+s_{m,k_m},\,q=\,\sum\limits_{i=1}^m\sum\limits_{j=1}^{k_i}s_{i,j}^2$, $\eta(0)$ is the leading coefficient of the polynomial  $Q_0(\frac{w}{2}),$ and $\ w_p,\,p=2,3,\ldots,s$ are
all the roots of the equation $Q_0(w)=0$ different from $1.$
\end{theorem}
\subsection{Enumeration of the rooted spanning forests}\label{forests}

One more consequence of main Theorem \ref{theorem1} is the following results established 
in \cite{GrunKwonMed}. Indeed, since the number of rooted spanning forests $F(H_n)$ 
in circulant foliation $H_n$ is given by $\chi_{n}(-1),$ we obtain the following result.

\begin{theorem}\label{theoremforests}
The number of rooted spanning forests $F(H_n)$ in circulant foliation  
$$H_n=H_{n}(G_1,\,G_2,\ldots, G_m)$$ is given by the formula
$$F(H_n)=|\eta(-1)|^{n}\prod_{p=1}^{s}|2T_n(w_p)-2|,$$ where
$s=s_{1,k_1}+s_{2,k_2}+\cdots+s_{m,k_m},\,\eta(-1)$ is the leading coefficient of the polynomial 
$Q_{-1}(\frac{w}{2}),$ and $\ w_p,\,p=1,2,\ldots,s$ are
all the roots of the equation $Q_{-1}(w)=0.$  
\end{theorem}

\subsection{Enumeration  of the rooted $k$-colored spanning forests}\label{forests} 
By Theorem \ref{forestcol}, the number of rooted $k$-colored spanning forests contained in 
graph $G$ on $n$ vertices is given by $k^{n}\chi_{G}(-1/k).$ Hence, we have the following result. 

\begin{theorem}\label{theoremkforests}
The number of rooted  $k$-colored spanning forests $F_k(H_n)$ in circulant foliation  
$$H_n=H_{n}(G_1,\,G_2,\ldots, G_m)$$  is given by the formula
$$F_k(H_n)=k^{n\,m}|\eta(-1/k)|^{n}\prod_{p=1}^{s}|2T_n(w_p)-2|,$$ where
$s=s_{1,k_1}+s_{2,k_2}+\cdots+s_{m,k_m},\,\eta(-1/k)$ is the leading coefficient of the polynomial 
$Q_{-1/k}(\frac{w}{2}),$  and  $\ w_p,\,p=1,2,\ldots,s$ are
all the roots of the equation $Q_{-1/k}(w)=0.$  
\end{theorem}

In a similar way, one can find the number $ F_{-k}(H_n)=k^{n\,m}\chi_{H_n}(1/k),$ the geometric meaning of which was explained in Theorem~\ref{superforestcol}.

\section{Examples}\label{examples}

\subsection{Circulant graph $C_{n}(s_{1},s_{2},\ldots,s_{k})$}\label{example1}

We consider the classical circulant graph $C_{n}(s_{1},s_{2},\ldots,s_{k})$ 
as a foliation $H_{n}(G_{1})$ on the one vertex graph $H=\{v_{1}\}$ with the fiber
$G_{1}=C_{n}(s_{1},s_{2},\ldots,s_{k}).$ In this case,
$d_{1}=0,\,L(H,X)=(x_{1}),\,P(z)=2k-\lambda-\sum\limits_{p=1}^{k}(z^{s_p}+z^{-s_p})$
and its Chebyshev transform is $Q(w)=2k-\lambda-\sum\limits_{p=1}^{k}2T_{s_p}(w).$ 
Different aspects of complexity for circulant graphs were investigated in the papers
\cite{XiebinLinZhang, Golin2010, MedMed17}. 
The number of rooted spanning forests for circulant graphs is investigated in \cite{GrunMed}.

{\bf Cyclic graph $C_{n}=C_{n}(1).$}
The equation  $Q_\lambda(w)=2-\lambda-2T_1(w)=0$ has  only one root $w=\frac{2-\lambda}{2}$ and $\eta=1.$  Hence,  $\chi_{n}(\lambda)=2T_{n}(\frac{2-\lambda}{2})-2.$ 

{\bf Circulant graph $C_{n}(1,2).$}
We have to solve  the equation  $Q_\lambda(w)=4-\lambda-2T_{1}(w)-2T_{2}(w)=6-\lambda- 2w-4 w^2=0.$ It has two solutions $w_{1}=\frac{1}{4}\left(-1+\sqrt{25-4\lambda}\right)$ and $w_{2}=\frac{1}{4}\left(-1-\sqrt{25-4\lambda}\right).$ By  Remark \ref{rm1}, $\eta=1.$ As a result, we obtain $\chi_{n}(\lambda)=\left(2T_n(w_{1})-2\right) \left(2T_n(w_{2})-2\right).$

\subsection{$I$-graph $I(n,k,l)$ and the generalized Petersen graph $GP(n,k)$}

Let $H$ be a path graph on two vertices, $G_{1}=C_{n}(k)$ and
$G_{2}=C_{n}(l).$ Then $I(n,k,l)=H_{n}(G_{1},G_{2})$ and
$GP(n,k)=I(n,k,1).$ We get $P(z)=(3-\lambda-z^{k}-z^{-k})(3-\lambda-z^{l}-z^{-l})-1$
and $Q(w)=(3-\lambda-2T_{k}(w))(3-\lambda-2T_{l}(w))-1.$ The arithmetical and
asymptotical properties of complexity for $I$-graphs were studied in
\cite{Ilya, AbrBaiGruMed, KwonMedMed17}.

{\bf Graph $I(n,1,1).$} Equation $Q_\lambda(w)=(3-\lambda-2w)^2-1=0$ has two solutions
$w_{1}=\frac{2-\lambda}{2}$ and $w_{2}=\frac{4-\lambda}{2}.$ Here, $\eta=1.$ Hence,
$$\chi_{n}(\lambda)=\left(2T_n(\frac{2-\lambda}{2})-2\right) \left(2T_n(\frac{4-\lambda}{2})-2\right).$$

\subsection{Sandwich of $m$ circulant graphs}

Consider a path graph $H$ on $m$ vertices.
Then $H_{n}(G_{1},G_{2},\ldots,G_{m})$ is a {\it sandwich graph} of circulant graphs
$G_{1},G_{2},\ldots,G_{m}.$ Here $d_{1}=d_{m}=1$ and $d_{i}=2,\,i=2,\ldots,m-1.$  In this  case,  base graph $H$ is the path
graph on $m$ vertices $v_1,v_2,\ldots,v_m.$ 
Let
$L(H,\,X)$ be the generalized Laplacian of graph $H$ with the set of
variables $X=(x_1,x_2,\ldots,x_m).$ We set $D(x_1,x_2,\ldots,x_m)=\det(L(H,\,X)).$
Then $$D(x_1,x_2,\ldots,x_m)=
\det\left(\begin{array}{ccccccc}
x_1 & -1  & 0  & \ldots & 0 & 0& 0 \\
-1  & x_2 & -1 & \ldots & 0 & 0& 0 \\
& \vdots & & & & \vdots & \\
0 & 0 & 0 & \ldots & -1 & x_{m-1} & -1\\
0 & 0 & 0 & \ldots &  0 & -1 & x_m\end{array}\right).$$ By direct calculation we obtain
$$D(x_1,x_2,\ldots,x_m)=x_1D(x_2,\ldots,x_m)-D(x_3,\ldots,x_m),\, D(x_1)=x_1,\,D(x_1,x_2)=x_1x_2-1.$$
Recall that $Q_\lambda(w)=D(w_1,w_2,\ldots,w_m),$ where $w_{i}=2k_{i}+d_i-\lambda-\sum\limits_{j=1}^{k_i}2\,T_{s_{i,j}}(w).$ Then $Q_\lambda(-1)=D(d_1+4t_1-\lambda,d_2+4t_2-\lambda,\ldots, d_m+4t_m-\lambda),$
where $w_i$ and $t_i$ are the same as in Remark~\ref{rm2}. The particular case $H_{n}(G_{1},G_{2})$ is studied in recent paper \cite{AbrBaiGruMed}.

{\bf Graph $H_n(C_{n}(1,2),C_{n}(1,2)).$}  We have $Q_\lambda(w)=(3-\lambda-2T_1(w)-2T_2(w))^2-1.$ Polynomial $Q_\lambda(w)$ has four roots
$w_{1,2}=\frac{1}{4}\left(-1\pm\sqrt{25-4\lambda}\right)$ and $w_{3,4}=\frac{1}{4}\left(-1\pm\sqrt{33-4\lambda}\right).$ By Remark \ref{rm1}, $\eta=1.$ Hence
$$\chi_{n}(\lambda)=\left(2T_n(w_{1})-2\right) \left(2T_n(w_{2})-2\right)\left(2T_n(w_{3})-2\right) \left(2T_n(w_{4})-2\right).$$
One can notice an interesting fact that the $\chi_{n}(6)$ is periodic sequence with period 6. 

{\bf Graph $H_{n}(C_{n}(1),C_{n}(1),C_{n}(1)).$} Here $$Q_\lambda(w)=-(-5 + 2 w + \lambda) (-3 + 2 w + \lambda) (-2 + 2 w + \lambda)\text{ and }\eta=1.$$ 
The equation $Q_\lambda(w)=0$ has three solutions. Hence,
$$\chi_{n}(\lambda)=\left(2T_n(\frac{2-\lambda}{2})-2\right) \left(2T_n(\frac{3-\lambda}{2})-2\right)\left(2T_n(\frac{5-\lambda}{2})-2\right) .$$

\subsection{Generalized $Y$-graph}

Consider the generalized $Y$-graph $Y_{n}(G_{1},G_{2},G_{3}),$ where $G_{i}=C_{n}(s_{i,1},\,s_{i,2},\ldots,s_{i,k_i}),\,i=1,2,3.$ Here
$$Q_\lambda(w)=4A_{1}(w)A_{2}(w)A_{3}(w)-A_{1}(w)A_{2}(w)-A_{1}(w)A_{3}(w)-A_{2}(w)A_{3}(w),$$
where $A_{i}(w)=2k_{i}+d_i-\lambda-\sum\limits_{j=1}^{k_{i}}2T_{s_{i,j}}(w).$

In the particular case $G_{1}=G_{2}=G_{3}=C_{n}(1)$ we have
$Y$-graph $Y(n;1,1,1).$  Then $Q_\lambda(w)=(-3 + 2 w + \lambda)^2 (6 -6 w -6 \lambda+2w \lambda+\lambda^2),\,\eta=3-\lambda.$
The  Laplace characteristic polynomial is given by the formula
$$\chi_{n}(\lambda)=(3-\lambda)^n\left(2T_n\left(\frac{6-6\lambda+\lambda^2}{2\,(3-\lambda)}\right)-2\right) \left(2T_n\left(\frac{3-\lambda}{2}\right)-2\right)^2.$$

Also, by Theorem~\ref{lorenzini} there exists an integer sequence $a(n)$ such that the number of rooted spanning forests $f(n)=\chi_{n}(-1)$ satisfies relations $f(n)=f(1)a(n)^2$ if $n$ is odd and $f(n)=21f(1)a(n)^2$ if $n$ is even. Here the multiple $21$ is a square-free part of $Q_{-1}(-1)$ and
$f(1)=20$ is the number of rooted spanning forests in the base $Y$-graph. 
Compare with the results of the paper \cite{GrunKwonMed}.

\subsection{Generalized $H$-graph}

Consider the generalized $H$-graph $H_{n}(G_{1},G_{2},G_{3},G_{4}),$ where $G_{i}=C_{n}(s_{i,1},\,s_{i,2},\ldots,s_{i,k_i}),\,i=1,2,3,4.$ Now we have
$$Q_\lambda(w)=A_{1}(w)A_{2}(w)A_{3}(w)A_{4}(w)\left((4-\frac{1}{A_{1}(w)}-\frac{1}{A_{2}(w)})
(4-\frac{1}{A_{3}(w)}-\frac{1}{A_{4}(w)})-1\right)$$ where $A_{i}(w)$ are the same as the above.

In case $G_{1}=G_{2}=G_{3}=G_{4}=C_{n}(1)$ we have $H$-graph
$H(n;1,1;1,1).$ Then $$Q_\lambda(w)=(-3 + 2 w + \lambda)^2 (10 - 8 w - 7 \lambda + 
   2 w \lambda + \lambda^2) (4 - 4 w - 5 \lambda + 
   2 w \lambda + \lambda^2)$$ and
$\eta(\lambda)=(-4 + \lambda) (-2 + \lambda).$ The Laplace characteristic polynomial of graph $H(n;
1,1; 1,1)$ is given by 
$$\chi_{n}(\lambda)=\eta(\lambda)^n\left(2T_n\left(\frac{-4+5\lambda-\lambda^2}{2(-2+\lambda)}\right)-2\right) \left(2T_n\left(\frac{-10+7\lambda-\lambda^2}{2(-4+\lambda)}\right)-2\right) \left(2T_n\left(\frac{3-\lambda}{2}\right)-2\right)^2.$$

Also, for the number of rooted spanning forest $f(n)$ and some integer sequence $a(n),$ we have $f(n)=f(1)a(n)^{2}$ for odd $n$ and $f(n)=7f(1)a(n)^2$ for even $n.$ Here $7$ is square-free
part of $Q_{-1}(-1)$ and $f(1)=128$ is the number of rooted spanning forests in the base $H$-graph. 

\subsection{Discrete torus $T_{n,m}=C_n\times C_m$}

We have $T_{n,m}=H_{n}(\underbrace{C_{n},\ldots,C_{n}}_{m
\textrm{ times}}),$ where base graph $H=C_{m}$ is a cycle with $m$ vertices and  each of $m$  fibers is $C_{n}.$ 
Here $d_i=2$  and $k_i=1$  for  $i=1,\ldots,m.$
So, the generalized Laplace matrix with respect to the set of
variables $X=(\underbrace{x,\ldots,x}_{m \textrm{ times}})$ has the
form $L(H,X)= \left(\begin{array}{cccccc}
x & -1 & 0 & \ldots & 0& -1 \\
-1 & x & -1 & \ldots & 0& 0 \\
& \vdots &  &  & &\vdots \\
-1 & 0 & 0 & \ldots & -1 & x\\
\end{array}\right).$ Then
$L(H,X)$ is an $m\times m$ circulant matrix with eigenvalues
$\mu_j=x-e^{\frac{2\pi i j}{m}}- e^{-\frac{2\pi i
j}{m}}=x-2\cos(\frac{2\pi j}{m}),j=1,\ldots,m.$ Hence, $\det
L(H,X)=\prod\limits_{j=1}^{m}\mu_j=2 T_{m}(\frac{x}{2})-2.$

We note that $Q_\lambda(w)=\det L(H,W),$  where  $W=(\underbrace{w_1,\ldots,w_m})$  and $w_i=2k_{i}+d_i-\lambda-\sum\limits_{j=1}^{k_i}2\,T_{s_{i,j}}(w)=4-\lambda-2w.$ 
Substituting $x=4-\lambda-2w$  in the formula for $\det L(H,X),$ we get
$Q_\lambda(w)=2 T_{m}(2- w-\frac{\lambda}{2})-2.$ All the roots of the equation $Q_\lambda(w)=0$ is given by the list $w_{j}=2-\frac{\lambda}{2}-\cos(\frac{2\pi j}{m}),\,j=1,2,\ldots,m.$

So, the Laplace characteristic polynomial for discrete torus $T_{n,m}$ is given by
$$\chi_n(\lambda)=\prod\limits_{j=1}^{m}(2T_{n}(w_{j})-2),$$ where $w_{j}=2-\frac{\lambda}{2}-\cos(\frac{2\pi j}{m}),\,j=1,2,\ldots,m.$

\subsection{Cartesian product $H\times C_{n}(s_1,s_2,\ldots,s_{k})$}

Let $H$ be an arbitrary $d$-regular graph on $m$ vertices. The
Cartesian product of a graph $H$ and circulant graph
$G=C_{n}(s_1,s_2,\ldots,s_{k})$ is exactly
$H_{n}(\underbrace{G,\ldots,G}_{m \textrm{ times}}).$ We have $Q_\lambda(w)=\det(L(H,W)),$ where 
$W=(\underbrace{u,u,\ldots,u}_{m  \textrm{ times}})$ and 
$u=2k-\lambda+d-\sum\limits_{j=1}^{k}2T_{s_{j}}(w).$  Note that $L(H,W)=u\,I_{m}-A(H)$, where $A(H)$ be
an adjacency matrix of $H.$ Therefore, 
$Q_\lambda(w)=\chi_{H}(2k-\lambda+d-\sum\limits_{j=1}^{k}2T_{s_{j}}(w)),$ where
$\chi_{H}(x)$ is characteristic polynomial of graph $H.$ In
particular, $Q_\lambda(-1)=\chi_{H}(d+4t-\lambda),$ where $t$ is the number of odd
elements in the set of jumps $\{s_{j},\,j=1,\ldots,k\}.$

\bigskip

\address{
Department of Mathematics \\
Yeungnam University \\
Gyeongsan, Gyeongbuk 38541\\
Korea
}
{ysookwon@ynu.ac.kr}

\address{ 
Sobolev Institute of Mathematics \\
Novosibirsk 630090\\
Novosibirsk State University \\
Novosibirsk 630090\\
Russia
}
{smedn@mail.ru}
%

\address{
Sobolev Institute of Mathematics \\
Novosibirsk 630090\\
Novosibirsk State University \\
Novosibirsk 630090\\
Russia
}
{ilyamednykh@mail.ru}

\end{document}